\documentclass[10pt,reqno]{amsart}
\setlength{\hoffset}{-.5in}
\setlength{\voffset}{-.25in}
\usepackage{amssymb,latexsym}
\usepackage{graphicx}
\usepackage{fancyhdr}
\usepackage{enumerate}
\usepackage{mathtools}

\newcommand{\R}{{\mathbb R}}

\newcommand{\C}{{\mathbb C}}

\newcommand{\B}{{\textrm{BC}}}
\newcommand{\p}{{\textrm{P}}}

\DeclarePairedDelimiter\abs{\lvert}{\rvert}
\newcommand{\mymod}[3]{#1 \equiv #2 \kern -0.5em \pmod{#3}}
\newcommand{\mynotmod}[3]{#1 \not \equiv #2 \kern -0.6em \pmod{#3}}

\theoremstyle{plain}
\numberwithin{equation}{section}
\newtheorem{thm}{Theorem}[section]
\newtheorem{theorem}[thm]{Theorem}

\newtheorem{lemma}[thm]{Lemma}
\newtheorem{definition}[thm]{Definition}
\newtheorem{proposition}[thm]{Proposition}

\begin{document}

\setcounter{page}{1}

\title[Bicomplex Third-order Jacobsthal Quaternions]{Bicomplex Third-order Jacobsthal Quaternions}
\author{Gamaliel Cerda}
\address{Institute of Mathematics\\
                Pontificia Universidad Cat\'olica de Valpara\'iso\\
                Blanco Viel 596, Cerro Bar\'on\\
                Valpara\'iso, Chile}
\email{gamaliel.cerda.m@mail.pucv.cl}

\keywords{Bicomplex number, Third-order Jacobsthal number, Third-order Jacobsthal quaternion, Bicomplex quaternion, Bicomplex third-order Jacobsthal quaternion.}
\subjclass[2010]{11B39, 20G20, 11R52.}

\begin{abstract}
The aim of this work is to consider the bicomplex third-order Jacobsthal quaternions and to present some properties involving this sequence, including the Binet-style formulae and the generating functions. Furthermore, Cassini's identity and d'Ocagne's identity for this type of bicomplex quaternions are given, and a different way to find the $n$-th term of this sequence is stated using the determinant of a four-diagonal matrix whose entries are bicomplex third-order quaternions.
\end{abstract}

\maketitle

\section{Introduction}
The set of bicomplex numbers, denoted by $\mathbb{BC}$, forms a two-dimensional algebra over $\C$, and since $\C$ is of dimension two over $\R$, the bicomplex numbers are an algebra over $\R$ of dimension four. The bicomplex numbers are defined by the basis $1$, $i$, $j$, $ij$, where $i$, $j$, and $ij$ satisfy the properties:
\begin{equation}\label{e0}
i^{2}=-1,\ j^{2}=-1,\ ij=ji.
\end{equation}
A bicomplex number $w$ can be expressed as follows:
\begin{equation}\label{e1}
\begin{aligned}
w&=z_{1}+jz_{2}\ \ \ \ (z_{1},z_{2}\in \C) \\
&=(x_{1}+iy_{1})+j(x_{2}+iy_{2}),
\end{aligned}
\end{equation}
where $x_{1}, x_{2}, y_{1}, y_{2}\in \R$.

The bicomplex numbers share some structures and properties of the complex numbers $\C$, but there are differences between them (see, \cite{Lu1,Lu2}). Bicomplex numbers forms a commutative ring with unity which contain the complex numbers. Furthermore, for any $w=x_{1}+iy_{1}+jx_{2}+ijy_{2}$ and $w'=x_{1}'+iy_{1}'+jx_{2}'+ijy_{2}'$, bicomplex addition and product are defined by the following:
\begin{equation}\label{e2}
w+w'=(x_{1}+x_{1}')+i(y_{1}+y_{1}')+j(x_{2}+x_{2}')+ij(y_{2}+y_{2}')
\end{equation}
and
\begin{equation}\label{e3}
\begin{aligned}
w\times w'&=(x_{1}+iy_{1}+jx_{2}+ijy_{2})\times (x_{1}'+iy_{1}'+jx_{2}'+ijy_{2}')\\
&=(x_{1}x_{1}'-y_{1}y_{1}'-x_{2}x_{2}'+y_{2}y_{2}')\\
&\ \ +i(x_{1}y_{1}'+y_{1}x_{1}'-x_{2}y_{2}'-y_{2}x_{2}')\\
&\ \ +j(x_{1}x_{2}'-y_{1}y_{2}'+x_{2}x_{1}'-y_{2}y_{1}')\\
&\ \ +ij(x_{1}y_{2}'+y_{1}x_{2}'+x_{2}y_{1}'+y_{2}x_{1}'),
\end{aligned}
\end{equation}
respectively.

The set of bicomplex numbers $\mathbb{BC}$ is a real vector space with this addition and the multiplication of a bicomplex number by a real scalar, and with the bicomplex number product, $\times$ is a real associative algebra. In addition, the vector space with the properties of scalar multiplication and the product of the bicomplex numbers is a commutative algebra. For more details about these type of numbers, see, for example, \cite{Ro-Sha}.

There are three different conjugations for bicomplex numbers as follows:
$$w_{i}^*=\overline{z_{1}}+j\overline{z_{2}}=x_{1}-iy_{1}+jx_{2}-ijy_{2},$$
$$w_{j}^*=z_{1}-jz_{2}=x_{1}+iy_{1}-jx_{2}-ijy_{2},$$
$$w_{ij}^*=\overline{z_{1}}-j\overline{z_{2}}=x_{1}-iy_{1}-jx_{2}+ijy_{2},$$
and the squares of norms of the bicomplex numbers are given by
$$Nr^{2}(w)_{i}=\mid w\times w_{i}^* \mid =\mid x_{1}^{2}+y_{1}^{2}-x_{2}^{2}-y_{2}^{2}+2j(x_{1}x_{2}+y_{1}y_{2}) \mid,$$
$$Nr^{2}(w)_{j}=\mid w\times w_{j}^* \mid =\mid x_{1}^{2}+x_{2}^{2}-y_{1}^{2}-y_{2}^{2}+2i(x_{1}y_{1}+x_{2}y_{2}) \mid,$$
$$Nr^{2}(w)_{ij}=\mid w\times w_{ij}^* \mid =\mid x_{1}^{2}+y_{1}^{2}+x_{2}^{2}+y_{2}^{2}+2ij(x_{1}y_{2}-x_{2}y_{1}) \mid.$$

In general, quaternions were formally introduced by W. R. Hamilton in 1843 and some background about this type of hypercomplex numbers can be found, for example, in \cite{Co-Sm,Wa}. The field $\mathbb{H}$ of quaternions is a four-dimensional non-commutative $\R$--field generated by four base elements $1$, $i$, $j$ and $k$ that satisfy the following rules:
\begin{equation}\label{e4}
i^{2}=j^{2}=k^{2}=-1,\ ij=-ji=k,\ jk=-kj=i, \ ki=-ik=j.
\end{equation}
Quaternions and bicomplex numbers are generalizations of complex numbers, but one difference between them is that quaternions are non-commutative, whereas bicomplex numbers are commutative. Similarly, considering bicomplex quaternions, they can also be defined by four base elements $1$, $i$, $j$, and $ij$ that satisfy the rules (\ref{e0}).

In \cite{Ce1}, Cerda-Morales introduced the third-order Jacobsthal quaternion sequence and such sequences have been studied in several papers (see, for example, \cite{Co-Ba}). Also generalizations of the third-order Jacosbtha quaternions have been presented in the literature (see, for example, \cite{Ce2}). Recall that, the sequence of third-order Jacobsthal numbers $\{J_{n}^{(3)}\}_{n=0}^{\infty}$ is defined by the following:
\begin{equation}\label{e5}
J_{n}^{(3)}=J_{n-1}^{(3)}+J_{n-2}^{(3)}+2J_{n-3}^{(3)},\ n\geq 3
\end{equation}
with the initial terms $J_{0}^{(3)}=0$ and $J_{1}^{(3)}=J_{2}^{(3)}=1$.

The Binet-style formula for this sequences is given by the following:
\begin{equation}\label{e6}
J_{n}^{(3)}=\frac{1}{7}\left(2^{n+1}-\left(\frac{A\omega_{1}^{n}-B\omega_{2}^{n}}{\omega_{1}-\omega_{2}}\right)\right)=\frac{1}{7}\left(2^{n+1}-V_{n}^{(3)}\right),
\end{equation}
where $A=-3-2\omega_{2}$, $B=-3-2\omega_{1}$ and $2$, $\omega_{1}$ and $\omega_{2}$ are the roots of the characteristic equation $x^{3}-x^{2}-x-2=0$ associated with the above recurrence relation (\ref{e5}). Here, the sequence $\{V_{n}^{(3)}\}_{n\geq 0}$ is defined by
\begin{equation}\label{ev}
V_{n}^{(3)}=\frac{A\omega_{1}^{n}-B\omega_{2}^{n}}{\omega_{1}-\omega_{2}}=\left\{ 
\begin{array}{ccc}
2 & \textrm{if} & \mymod{n}{0}{3} \\ 
-3 & \textrm{if} & \mymod{n}{1}{3} \\ 
1 & \textrm{if} & \mymod{n}{2}{3}%
\end{array}%
\right. .
\end{equation}
Note that $\omega_{1}+\omega_{2}=-1$ and $\omega_{1}\omega_{2}=1$. Furthermore, $V_{n+2}^{(3)}+V_{n+1}^{(3)}+V_{n}^{(3)}=0$ and $V_{n}^{(3)}=V_{n+3}^{(3)}$ for all $n\geq 0$.

The more recent research in the topic of sequences of bicomplex quaternions is the work of Ayd\i n in \cite{Ay} about the bicomplex Fibonacci quaternions and Catarino in \cite{Ca} about bicomplex $k$-Pell quaternions. Motivated essentially by this works, in this paper, we introduce the bicomplex third-order Jacobsthal quaternions and we obtain some properties, including the respective Binet-style formulae, generating functions and some other identities.

\section{The Bicomplex Third-order Jacobsthal Quaternions and Some Basic Properties}
Let $1$, $i$, $j$ and $ij$ satisfy the rules (\ref{e0}).
\begin{definition}\label{d1}
The bicomplex third-order Jacobsthal quaternions $\{\emph{\B}_{J,n}^{(3)}\}_{n=0}^{\infty}$ are defined by the following:
\begin{equation}\label{e7}
\emph{\B}_{J,n}^{(3)}=J_{n}^{(3)}+iJ_{n+1}^{(3)}+jJ_{n+2}^{(3)}+ijJ_{n+3}^{(3)},
\end{equation}
where $J_{n}^{(3)}$ is the $n$-th third-order Jacobsthal number.
\end{definition} 

From Definition \ref{d1} and the use of (\ref{e5}), we easily show that $\{\B_{J,n}^{(3)}\}_{n=0}^{\infty}$ can also be defined by the recurrence relation:
\begin{equation}\label{e8}
\B_{J,n}^{(3)}=\B_{J,n-1}^{(3)}+\B_{J,n-2}^{(3)}+2\B_{J,n-3}^{(3)},\ n\geq3
\end{equation}
with the initial conditions $\B_{J,0}^{(3)}=i+j+2ij$, $\B_{J,1}^{(3)}=1+i+2j+5ij$ and $\B_{J,2}^{(3)}=1+2i+5j+9ij$.

Let $n, m\geq 0$. For two bicomplex third-order Jacobsthal quaternions $\B_{J,n}^{(3)}$ and $\B_{J,m}^{(3)}$, addition and subtraction are obviously defined by the following:
\begin{equation}\label{e9}
\begin{aligned}
\B_{J,n}^{(3)}\pm \B_{J,m}^{(3)}&=\left(J_{n}^{(3)}\pm J_{m}^{(3)}\right)+i\left(J_{n+1}^{(3)}\pm J_{m+1}^{(3)}\right)\\
&\ \ \ + j\left(J_{n+2}^{(3)}\pm J_{m+2}^{(3)}\right)+ij\left(J_{n+3}^{(3)}\pm J_{m+3}^{(3)}\right),
\end{aligned}
\end{equation}
and multiplication by
\begin{equation}\label{e10}
\begin{aligned}
\B_{J,n}^{(3)}\times \B_{J,m}^{(3)}&=\left(J_{n}^{(3)}J_{m}^{(3)}-J_{n+1}^{(3)}J_{m+1}^{(3)}-J_{n+2}^{(3)}J_{m+2}^{(3)}+J_{n+3}^{(3)}J_{m+3}^{(3)}\right)\\
&\ \ +i\left(J_{n}^{(3)}J_{m+1}^{(3)}+J_{n+1}^{(3)}J_{m}^{(3)}-J_{n+2}^{(3)}J_{m+3}^{(3)}-J_{n+3}^{(3)}J_{m+2}^{(3)}\right)\\
&\ \ +j\left(J_{n}^{(3)}J_{m+2}^{(3)}-J_{n+1}^{(3)}J_{m+3}^{(3)}+J_{n+2}^{(3)}J_{m}^{(3)}-J_{n+3}^{(3)}J_{m+1}^{(3)}\right)\\
&\ \ +ij\left(J_{n}^{(3)}J_{m+3}^{(3)}+J_{n+1}^{(3)}J_{m+2}^{(3)}+J_{n+2}^{(3)}J_{m+1}^{(3)}+J_{n+3}^{(3)}J_{m}^{(3)}\right)\\
&=\B_{J,m}^{(3)}\times \B_{J,n}^{(3)}.
\end{aligned}
\end{equation}

The multiplication of a bicomplex third-order Jacobsthal quaternion by the real scalar $\lambda$ is defined by the following:
$$\lambda \B_{J,n}^{(3)}=\lambda J_{n}^{(3)}+i\lambda J_{n+1}^{(3)}+j\lambda J_{n+2}^{(3)}+ij\lambda J_{n+3}^{(3)}.$$

The different conjugations for bicomplex k-Pell quaternions are presented as follows:
\begin{equation}\label{n1}
\left(\B_{J,n}^{(3)}\right)_{i}^*=J_{n}^{(3)}-iJ_{n+1}^{(3)}+jJ_{n+2}^{(3)}-ijJ_{n+3}^{(3)},
\end{equation}
\begin{equation}\label{n2}
\left(\B_{J,n}^{(3)}\right)_{j}^*=J_{n}^{(3)}+iJ_{n+1}^{(3)}-jJ_{n+2}^{(3)}-ijJ_{n+3}^{(3)},
\end{equation}
\begin{equation}\label{n3}
\left(\B_{J,n}^{(3)}\right)_{ij}^*=J_{n}^{(3)}-iJ_{n+1}^{(3)}-jJ_{n+2}^{(3)}+ijJ_{n+3}^{(3)}.
\end{equation}

Using these conjugations, we have two basic properties of the bicomplex third-order Jacobsthal quaternions.
\begin{lemma}
For any integer numbers $n, m\geq 0$, the following relations between the conjugate of these bicomplex third-order Jacobsthal quaternions are true:
\begin{equation}\label{l1}
\left(\emph{\p}_{J,n,m}^{(3)}\right)_{i}^*=\left(\emph{\B}_{J,n}^{(3)}\right)_{i}^*\times \left(\emph{\B}_{J,m}^{(3)}\right)_{i}^*=\left(\emph{\B}_{J,m}^{(3)}\right)_{i}^*\times \left(\emph{\B}_{J,n}^{(3)}\right)_{i}^*,
\end{equation}
\begin{equation}\label{l2}
\left(\emph{\p}_{J,n,m}^{(3)}\right)_{j}^*=\left(\emph{\B}_{J,n}^{(3)}\right)_{j}^*\times \left(\emph{\B}_{J,m}^{(3)}\right)_{j}^*=\left(\emph{\B}_{J,m}^{(3)}\right)_{j}^*\times \left(\emph{\B}_{J,n}^{(3)}\right)_{j}^*,
\end{equation}
\begin{equation}\label{l3}
\left(\emph{\p}_{J,n,m}^{(3)}\right)_{ij}^*=\left(\emph{\B}_{J,n}^{(3)}\right)_{ij}^*\times \left(\emph{\B}_{J,m}^{(3)}\right)_{ij}^*=\left(\emph{\B}_{J,m}^{(3)}\right)_{ij}^*\times \left(\emph{\B}_{J,n}^{(3)}\right)_{ij}^*,
\end{equation}
where $\emph{\p}_{J,n,m}^{(3)}=\emph{\B}_{J,n}^{(3)}\times \emph{\B}_{J,m}^{(3)}$.
\end{lemma}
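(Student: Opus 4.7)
The plan is to reduce all three identities to two general facts about the bicomplex algebra: each conjugation $w\mapsto w_{*}^{*}$ (for $*\in\{i,j,ij\}$) is a ring automorphism of $\mathbb{BC}$, and bicomplex multiplication is commutative. Once these are in hand, the lemma is immediate upon specialization to $w=\B_{J,n}^{(3)}$ and $w'=\B_{J,m}^{(3)}$.

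First, I would verify multiplicativity of each conjugation. The cleanest route is to use the complex decomposition $w=z_1+jz_2$, $w'=z_1'+jz_2'$, for which $w\times w'=(z_1z_1'-z_2z_2')+j(z_1z_2'+z_2z_1')$. Then one checks:
\begin{itemize}
\item $w_{i}^{*}=\overline{z_1}+j\overline{z_2}$, so multiplicativity of $w\mapsto w_{i}^{*}$ follows from the multiplicativity of ordinary complex conjugation;
\item $w_{j}^{*}=z_1-jz_2$, for which the identity $(w\times w')_{j}^{*}=w_{j}^{*}\times w'{}_{j}^{*}$ reduces to a two-line calculation in the components $z_1,z_2$;
\item $w_{ij}^{*}=\overline{z_1}-j\overline{z_2}$, which is the composition of the previous two, hence also multiplicative.
\end{itemize}
Alternatively, each conjugation may be read off as a sign flip on the basis elements $\{1,i,j,ij\}$ that is consistent with the table (\ref{e0}), and multiplicativity can be checked termwise against (\ref{e3}).

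Second, I would specialize to $w=\B_{J,n}^{(3)}$ and $w'=\B_{J,m}^{(3)}$. Since $\p_{J,n,m}^{(3)}=\B_{J,n}^{(3)}\times \B_{J,m}^{(3)}$, the multiplicativity established above yields
\[
\left(\p_{J,n,m}^{(3)}\right)_{*}^{*}=\left(\B_{J,n}^{(3)}\right)_{*}^{*}\times\left(\B_{J,m}^{(3)}\right)_{*}^{*}
\]
for $*\in\{i,j,ij\}$, which is the first equality in each of (\ref{l1})--(\ref{l3}). The second equality in each line is simply the commutativity of bicomplex multiplication already recorded at the end of (\ref{e10}), applied to the bicomplex numbers $(\B_{J,n}^{(3)})_{*}^{*}$ and $(\B_{J,m}^{(3)})_{*}^{*}$.

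The only real obstacle is sign bookkeeping in the multiplicativity step, and this is precisely what the $z_1+jz_2$ representation is designed to absorb; working in that representation turns each verification into a one- or two-line check rather than a sixteen-term expansion against (\ref{e3}). Once multiplicativity is settled, nothing in the argument uses the specific Jacobsthal recurrence (\ref{e5}) — the lemma is really a structural statement about bicomplex products — so no additional identities for $J_{n}^{(3)}$ need to be invoked.
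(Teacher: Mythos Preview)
Your proposal is correct and, at bottom, follows the same strategy the paper indicates: verify that each conjugation is multiplicative and invoke commutativity of the bicomplex product. The paper's proof is a one-line pointer to the conjugation formulas (\ref{n1})--(\ref{n3}) and the explicit product (\ref{e10}), implicitly asking the reader to expand and compare all sixteen real terms. Your route via the representation $w=z_1+jz_2$ packages the same verification far more efficiently --- each conjugation becomes a one- or two-line check rather than a full expansion --- and your observation that the composition $(\,\cdot\,)_{ij}^{*}=(\,\cdot\,)_{j}^{*}\circ(\,\cdot\,)_{i}^{*}$ handles the third case for free is a nice economy the paper does not mention. You are also right that nothing about the Jacobsthal recurrence (\ref{e5}) enters; the lemma is purely a structural fact about $\mathbb{BC}$, and the paper's proof does not use (\ref{e5}) either.
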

\begin{proof}
We can prove these equalities using (\ref{n1}), (\ref{n2}), and (\ref{n3}), and the multiplication of two bicomplex third-orderJacobsthal quaternions in (\ref{e10}).
\end{proof}

\begin{lemma}
For any positive integer number $n$, the squares of norms in different ways of the bicomplex third-order Jacobsthal quaternions are given by the following:
\begin{equation}\label{l4}
Nr^{2}\left(\emph{\B}_{J,n}^{(3)}\right)_{i}= \abs[\Big]{\emph{\B}_{J,n}^{(3)} \times \left(\emph{\B}_{J,n}^{(3)}\right)_{i}^*}=\abs[\Big]{X_{1}+2jY_{1}},
\end{equation}
\begin{equation}\label{l5}
Nr^{2}\left(\emph{\B}_{J,n}^{(3)}\right)_{j}= \abs[\Big]{\emph{\B}_{J,n}^{(3)} \times \left(\emph{\B}_{J,n}^{(3)}\right)_{j}^*}=\abs[\Big]{X_{2}+2iY_{2}},
\end{equation}
\begin{equation}\label{l6}
Nr^{2}\left(\emph{\B}_{J,n}^{(3)}\right)_{ij}= \abs[\Big]{\emph{\B}_{J,n}^{(3)} \times \left(\emph{\B}_{J,n}^{(3)}\right)_{ij}^*}=\abs[\Big]{X_{3}+2ijY_{3}},
\end{equation}
where 
\begin{align*}
X_{1}&=\frac{1}{49}\left\lbrace
\begin{array}{c}
-75\cdot 2^{2n+2} -3\cdot 2^{n+2}\left(2V_{n+1}^{(3)}-3V_{n}^{(3)}\right)-2V_{n}^{(3)}V_{n+1}^{(3)}-\left(V_{n}^{(3)}\right)^{2}
\end{array}
\right\rbrace,\\
Y_{1}&=\frac{1}{49}\left\lbrace
\begin{array}{c}
5\cdot 2^{2n+4} -2^{n+1}\left(2V_{n+1}^{(3)}-5V_{n+2}^{(3)}\right)-\left(V_{n}^{(3)}\right)^{2}
\end{array}
\right\rbrace,\\
X_{2}&=\frac{1}{49}\left\lbrace
\begin{array}{c}
-51\cdot 2^{2n+2} -2^{n+2}\left(6V_{n+2}^{(3)}-5V_{n}^{(3)}\right)+2V_{n}^{(3)}V_{n+1}^{(3)}+\left(V_{n}^{(3)}\right)^{2}
\end{array}
\right\rbrace,\\
Y_{2}&=\frac{1}{49}\left\lbrace
\begin{array}{c}
17\cdot 2^{2n+3} -2^{n+1}\left(5V_{n}^{(3)}+7V_{n+2}^{(3)}\right)-\left(V_{n}^{(3)}\right)^{2}
\end{array}
\right\rbrace,\\
X_{3}&=\frac{1}{49}\left\lbrace
\begin{array}{c}
85\cdot 2^{2n+2} -2^{n+2}\left(2V_{n+2}^{(3)}+7V_{n}^{(3)}\right)-2V_{n+1}^{(3)}V_{n+2}^{(3)}+3\left(V_{n}^{(3)}\right)^{2}
\end{array}
\right\rbrace,\\
Y_{3}&=\frac{1}{49}\left\lbrace
\begin{array}{c}
 -2^{n+1}\left(11V_{n}^{(3)}-2V_{n+1}^{(3)}\right)+\left(V_{n}^{(3)}\right)^{2}-V_{n+1}^{(3)}V_{n+2}^{(3)}
\end{array}
\right\rbrace.
\end{align*}
\end{lemma}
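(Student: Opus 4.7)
The plan is to treat each of the three identities (\ref{l4}), (\ref{l5}), (\ref{l6}) separately but in parallel, since each follows by specializing the bicomplex product (\ref{e3}) to the case where the two factors are $\B_{J,n}^{(3)}$ and the corresponding conjugate from (\ref{n1})--(\ref{n3}). Concretely, writing $\B_{J,n}^{(3)}=x_{1}+iy_{1}+jx_{2}+ijy_{2}$ with $x_{1}=J_{n}^{(3)}$, $y_{1}=J_{n+1}^{(3)}$, $x_{2}=J_{n+2}^{(3)}$, $y_{2}=J_{n+3}^{(3)}$, the products $\B_{J,n}^{(3)}\times (\B_{J,n}^{(3)})_{i}^{*}$, $\B_{J,n}^{(3)}\times (\B_{J,n}^{(3)})_{j}^{*}$, and $\B_{J,n}^{(3)}\times (\B_{J,n}^{(3)})_{ij}^{*}$ reduce, by direct application of (\ref{e3}), to bicomplex numbers whose nonzero components are read off from the general identities $w\times w_{i}^{*}=x_{1}^{2}+y_{1}^{2}-x_{2}^{2}-y_{2}^{2}+2j(x_{1}x_{2}+y_{1}y_{2})$, $w\times w_{j}^{*}=x_{1}^{2}-y_{1}^{2}+x_{2}^{2}-y_{2}^{2}+2i(x_{1}y_{1}+x_{2}y_{2})$, and $w\times w_{ij}^{*}=x_{1}^{2}+y_{1}^{2}+x_{2}^{2}+y_{2}^{2}+2ij(x_{1}y_{2}-x_{2}y_{1})$ already recorded in the introduction. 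This immediately identifies each of $X_{1},X_{2},X_{3}$ and $Y_{1},Y_{2},Y_{3}$ as an explicit $\Z$-linear combination of the six quantities $(J_{n+k}^{(3)})^{2}$ and $J_{n+k}^{(3)}J_{n+\ell}^{(3)}$ for $k,\ell\in\{0,1,2,3\}$.

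Next I would substitute the Binet-style formula (\ref{e6}), in the form $J_{n}^{(3)}=\tfrac{1}{7}\bigl(2^{n+1}-V_{n}^{(3)}\bigr)$, into every one of those squares and cross products. Each product $J_{n+k}^{(3)}J_{n+\ell}^{(3)}$ then expands as
\[
\tfrac{1}{49}\Bigl(2^{2n+k+\ell+2}-2^{n+k+1}V_{n+\ell}^{(3)}-2^{n+\ell+1}V_{n+k}^{(3)}+V_{n+k}^{(3)}V_{n+\ell}^{(3)}\Bigr),
\]
so the sums $X_{s}$ and $Y_{s}$ split cleanly into three parts: a pure power-of-two part of order $2^{2n+2}$, a mixed part of order $2^{n+2}$ whose coefficient is a $\Z$-linear combination of $V_{n}^{(3)},V_{n+1}^{(3)},V_{n+2}^{(3)},V_{n+3}^{(3)}$, and a quadratic part in the $V$'s. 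The three pure power-of-two coefficients $-75,-51,85$ and $5,17,0$ for the $Y_{s}$'s come out of straightforward arithmetic on the signed sums $1\pm4\pm16\pm64$ and $4\pm32$ (resp.\ $8,8,8$) appearing in each case.

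To reach the stated closed form I would then apply the two identities in the paragraph after (\ref{ev}), namely $V_{n+3}^{(3)}=V_{n}^{(3)}$ and $V_{n+2}^{(3)}=-V_{n}^{(3)}-V_{n+1}^{(3)}$, which allow me to eliminate $V_{n+2}^{(3)}$ and $V_{n+3}^{(3)}$ and collect the remaining $V$'s into the pairs $(V_{n}^{(3)},V_{n+1}^{(3)})$ or $(V_{n}^{(3)},V_{n+2}^{(3)})$ used in the statement. The quadratic part is handled the same way: for instance $V_{n}^{(3)\,2}+V_{n+1}^{(3)\,2}-V_{n+2}^{(3)\,2}-V_{n+3}^{(3)\,2}$ collapses, after using $V_{n+3}^{(3)}=V_{n}^{(3)}$ and $V_{n+2}^{(3)}=-V_{n}^{(3)}-V_{n+1}^{(3)}$, to $-\bigl(V_{n}^{(3)}\bigr)^{2}-2V_{n}^{(3)}V_{n+1}^{(3)}$, matching the tail of $X_{1}$; the analogous reductions produce the tails of $X_{2},X_{3}$ and $Y_{1},Y_{2},Y_{3}$. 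Repeating this for the six expressions finishes the lemma.

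The only real obstacle is bookkeeping: there are six separate expansions and each one involves six or eight terms whose power-of-two weights and $V$-indices must be tracked carefully when the conjugation flips signs. There is no conceptual difficulty beyond the algebra, so I would organize the calculation in a single table with rows indexed by $(k,\ell)$ and columns for the three contributions above, carry the reduction via $V_{n+3}^{(3)}=V_{n}^{(3)}$, $V_{n}^{(3)}+V_{n+1}^{(3)}+V_{n+2}^{(3)}=0$ through once, and then specialize the signed sums that define $X_{s}$ and $Y_{s}$ to read off the six stated formulas simultaneously.
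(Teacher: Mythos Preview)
Your proposal is correct and follows essentially the same approach as the paper: identify each $X_{s},Y_{s}$ as the appropriate signed sum/cross product of $J_{n+k}^{(3)}$'s via the general norm formulas $w\times w_{i}^{*}$, $w\times w_{j}^{*}$, $w\times w_{ij}^{*}$, substitute $J_{n}^{(3)}=\tfrac{1}{7}(2^{n+1}-V_{n}^{(3)})$, and simplify using $V_{n+3}^{(3)}=V_{n}^{(3)}$ and $V_{n}^{(3)}+V_{n+1}^{(3)}+V_{n+2}^{(3)}=0$. The paper carries out only the $X_{1},Y_{1}$ case in detail and declares the others analogous, whereas you propose a tabular organisation doing all six in parallel, but the underlying computation is identical.
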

\begin{proof}
These equalities can easily be proved using Definition \ref{d1} and of norm of bicomplex third-order Jacobsthal quaternions taking into account the three different conjugations given in (\ref{n1}), (\ref{n2}) and (\ref{n3}), considering $z_{1}=J_{n}^{(3)}+iJ_{n+1}^{(3)}$ and $z_{2}=J_{n+2}^{(3)}+iJ_{n+3}^{(3)}$. We present the proof of the first equality. For this, by taking into account that $J_{n}^{(3)}=\frac{1}{7}\left(2^{n+1}-V_{n}^{(3)}\right)$ and sequence $V_{n}^{(3)}$ as in (\ref{ev}), we obtain $\B_{J,n}^{(3)} \times \left(\B_{J,n}^{(3)}\right)_{i}^*=X_{1}+2jY_{1}$, where
\begin{align*}
X_{1}&=\left\lbrace
\begin{array}{c}
\left(J_{n}^{(3)}\right)^{2}+\left(J_{n+1}^{(3)}\right)^{2}-\left(J_{n+2}^{(3)}\right)^{2}-\left(J_{n+3}^{(3)}\right)^{2}
\end{array}
\right\rbrace\\
&=\frac{1}{49}\left\lbrace
\begin{array}{c}
\left(2^{n+1}-V_{n}^{(3)}\right)^{2}+\left(2^{n+2}-V_{n+1}^{(3)}\right)^{2}\\
-\left(2^{n+3}-V_{n+2}^{(3)}\right)^{2}-\left(2^{n+4}-V_{n+3}^{(3)}\right)^{2}
\end{array}
\right\rbrace\\
&=\frac{1}{49}\left\lbrace
\begin{array}{c}
-75\cdot 2^{2n+2} -2^{n+2}\left(V_{n}^{(3)}+2V_{n+1}^{(3)}-4V_{n+2}^{(3)}-8V_{n+3}^{(3)}\right)\\
+\left(V_{n}^{(3)}\right)^{2}+\left(V_{n+1}^{(3)}\right)^{2}-\left(V_{n+2}^{(3)}\right)^{2}-\left(V_{n+3}^{(3)}\right)^{2}
\end{array}
\right\rbrace\\
&=\frac{1}{49}\left\lbrace
\begin{array}{c}
-75\cdot 2^{2n+2} -3\cdot 2^{n+2}\left(2V_{n+1}^{(3)}-3V_{n}^{(3)}\right)-2V_{n}^{(3)}V_{n+1}^{(3)}-\left(V_{n}^{(3)}\right)^{2}
\end{array}
\right\rbrace\\
\end{align*}
and
\begin{align*}
Y_{1}&=J_{n}^{(3)}J_{n+2}^{(3)}+J_{n+1}^{(3)}J_{n+3}^{(3)}\\
&=\frac{1}{49}\left\lbrace
\begin{array}{c}
\left(2^{n+1}-V_{n}^{(3)}\right)\left(2^{n+3}-V_{n+2}^{(3)}\right)+\left(2^{n+2}-V_{n+1}^{(3)}\right)\left(2^{n+4}-V_{n+3}^{(3)}\right)
\end{array}
\right\rbrace\\
&=\frac{1}{49}\left\lbrace
\begin{array}{c}
5\cdot 2^{2n+4} -2^{n+1}\left(V_{n+2}^{(3)}+2V_{n+3}^{(3)}+4V_{n}^{(3)}+8V_{n+1}^{(3)}\right)\\
+V_{n}^{(3)}V_{n+2}^{(3)}+V_{n+1}^{(3)}V_{n+3}^{(3)}
\end{array}
\right\rbrace\\
&=\frac{1}{49}\left\lbrace
\begin{array}{c}
5\cdot 2^{2n+4} -2^{n+1}\left(2V_{n+1}^{(3)}-5V_{n+2}^{(3)}\right)-\left(V_{n}^{(3)}\right)^{2}
\end{array}
\right\rbrace,
\end{align*}
and the result follows for $Nr^{2}\left(\B_{J,n}^{(3)}\right)_{i}$. In a similar way, we can find the expressions of $Nr^{2}\left(\B_{J,n}^{(3)}\right)_{j}$ and $Nr^{2}\left(\B_{J,n}^{(3)}\right)_{ij}$.
\end{proof}

The sum of the first $n$ terms of the bicomplex third-order Jacobsthal quaternions sequence is stated in the next result.
\begin{theorem}\label{t1}
The sum of the first $n$ terms of the bicomplex third-order Jacobsthal quaternions sequence is given by the following:
\begin{equation}\label{ee0}
\begin{aligned}
&\sum_{s=0}^{n}\emph{\B}_{J,s}^{(3)}=S_{J}(n)+iS_{J}(n+1)+j\left(S_{J}(n+2)-1\right)+ij\left(S_{J}(n+3)-2\right)\\
&=\left\{ 
\begin{array}{ccc}
J_{n+1}^{(3)}-1 +i\left(J_{n+2}^{(3)}\right)+j\left(J_{n+3}^{(3)}-1\right)+ij\left(J_{n+4}^{(3)}-3\right)& \textrm{if} &\mymod{n}{0}{3} \\ 
J_{n+1}^{(3)} +i\left(J_{n+2}^{(3)}\right)+j\left(J_{n+3}^{(3)}-2\right)+ij\left(J_{n+4}^{(3)}-2\right)& \textrm{if} & \mymod{n}{1}{3} \\ 
J_{n+1}^{(3)} +i\left(J_{n+2}^{(3)}-1\right)+j\left(J_{n+3}^{(3)}-1\right)+ij\left(J_{n+4}^{(3)}-2\right)&  \textrm{if} & \mymod{n}{2}{3} 
\end{array}
\right. ,
\end{aligned}
\end{equation}
where $S_{J}(n)=\frac{1}{3}\left(J_{n+2}^{(3)}+2J_{n}^{(3)}-1\right)$ denotes the sum of the first $n$ terms of third-order Jacobsthal sequence, which is given by \cite[Page 32]{Co-Ba}.
\end{theorem}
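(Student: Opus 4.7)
The plan is to reduce the sum to four scalar third-order Jacobsthal sums by linearity, apply the known formula for $S_J$, and then repackage the result in the cased form by exploiting the period-$3$ behavior of $V_n^{(3)}$.

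First I would use Definition~\ref{d1} componentwise to write
\begin{equation*}
\sum_{s=0}^{n}\B_{J,s}^{(3)}=\sum_{s=0}^{n}J_{s}^{(3)}+i\sum_{s=0}^{n}J_{s+1}^{(3)}+j\sum_{s=0}^{n}J_{s+2}^{(3)}+ij\sum_{s=0}^{n}J_{s+3}^{(3)}.
\end{equation*}
A shift of index in each of the last three sums, together with the initial values $J_{0}^{(3)}=0$ and $J_{1}^{(3)}=J_{2}^{(3)}=1$, yields $\sum_{s=0}^{n}J_{s+1}^{(3)}=S_{J}(n+1)$, $\sum_{s=0}^{n}J_{s+2}^{(3)}=S_{J}(n+2)-1$, and $\sum_{s=0}^{n}J_{s+3}^{(3)}=S_{J}(n+3)-2$. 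This already delivers the first displayed equality of (\ref{ee0}).

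Next, to obtain the cased expressions I would establish the auxiliary identity
\begin{equation*}
S_{J}(n)=J_{n+1}^{(3)}-\varepsilon_{n},\qquad \varepsilon_{n}=\begin{cases}1 & \text{if }\mymod{n}{0}{3},\\ 0 & \text{otherwise},\end{cases}
\end{equation*}
by substituting the Binet-style relation $J_{n}^{(3)}=\tfrac{1}{7}(2^{n+1}-V_{n}^{(3)})$ into $S_{J}(n)=\tfrac{1}{3}(J_{n+2}^{(3)}+2J_{n}^{(3)}-1)$, simplifying with the linear recursion $V_{n+2}^{(3)}+V_{n+1}^{(3)}+V_{n}^{(3)}=0$, and then reading off the three residues from (\ref{ev}). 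The powers of $2$ cancel cleanly and only a linear combination of $V_{n}^{(3)}$ and $V_{n+1}^{(3)}$ (plus a constant) remains, which by (\ref{ev}) is $21$ when $n\equiv 0\pmod 3$ and $0$ otherwise.

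Finally I would plug this identity into the four coefficients obtained in the first step. The shifts by $0,1,2,3$ cycle through the three residue classes mod $3$, so exactly one of the four coefficients $S_{J}(n)$, $S_{J}(n+1)$, $S_{J}(n+2)-1$, $S_{J}(n+3)-2$ acquires an extra $-1$ in each case; bookkeeping these three cases matches the right-hand side of (\ref{ee0}) exactly. The main nuisance is this case analysis: each of the three residues of $n$ must be checked against the three shifted residues to confirm that the corrections $-1,\,0,\,-1,\,-2$ (resp.\ $0,\,0,\,-2,\,-2$ and $0,\,-1,\,-1,\,-2$) land in the right slots; once the auxiliary identity for $S_{J}$ is in hand, everything else is bookkeeping with the $V_{n}^{(3)}$ periodicity.
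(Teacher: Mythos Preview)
Your approach is essentially the same as the paper's: decompose componentwise via Definition~\ref{d1}, shift indices, and then invoke the closed form for $S_J(n)$ --- the only difference being that the paper cites the piecewise identity $S_J(n)=J_{n+1}^{(3)}-\varepsilon_n$ directly from \cite{Co-Ba} rather than rederiving it through the Binet formula. One small slip in your bookkeeping: for $n\equiv 0\pmod 3$ the $ij$-correction should be $-3$, not $-2$, since $n+3\equiv 0$ contributes an extra $-1$ from $\varepsilon_{n+3}$ on top of the $-2$ already present.
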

\begin{proof}
By the use of Definition \ref{d1}, we have: 
\begin{align*}
\sum_{s=0}^{n}\B_{J,s}^{(3)}&=\sum_{s=0}^{n}J_{s}^{(3)}+i\sum_{s=0}^{n}J_{s+1}^{(3)}+j\sum_{s=0}^{n}J_{s+2}^{(3)}+ij\sum_{s=0}^{n}J_{s+3}^{(3)}\\
&=\sum_{s=0}^{n}J_{s}^{(3)}+i\sum_{s=1}^{n+1}J_{s}^{(3)}+j\sum_{s=2}^{n+2}J_{s}^{(3)}+ij\sum_{s=3}^{n+3}J_{s}^{(3)}\\
&=\sum_{s=0}^{n}J_{s}^{(3)}+i\left(\sum_{s=0}^{n+1}J_{s}^{(3)}\right)+j\left(\sum_{s=0}^{n+2}J_{s}^{(3)}-1\right)+ij\left(\sum_{s=0}^{n+3}J_{s}^{(3)}-2\right)\\
\end{align*}
Now, taking into account the result stated in \cite[Page 32]{Co-Ba}, $$\sum_{0}^{n}J_{s}^{(3)}=\frac{1}{3}\left(J_{n+2}^{(3)}+2J_{n}^{(3)}-1\right)=\left\{ 
\begin{array}{ccc}
J_{n+1}^{(3)}-1 & \textrm{if} & \mymod{n}{0}{3} \\ 
J_{n+1}^{(3)} & \textrm{if} & \mynotmod{n}{0}{3} 
\end{array}
\right. ,$$ and the initial condition $J_{0}^{(3)}=0$, the result easily follows.
\end{proof}

\section{Generating functions and Binet's formula}
Next, we shall give the generating functions for the bicomplex third-order Jacobsthal quaternions sequence. We shall write such sequence as a power series where each term of the sequence corresponds to coefficients of the series. Consider the bicomplex third-order Jacobsthal quaternions sequence $\{\B_{J,n}^{(3)}\}_{n=0}^{\infty}$. By definition of generating function of a sequence, considering this sequence, the associated generating function $g_{\B_{J,n}^{(3)}}(t)$ is defined, respectively, by the following:
\begin{equation}\label{g1}
g_{\B_{J,n}^{(3)}}(t)=\sum_{n=0}^{\infty}\B_{J,n}^{(3)}t^{n}
\end{equation}

Therefore, using (\ref{g1}), we obtain the following result:
\begin{theorem}\label{t2}
The generating function for the bicomplex third-order quaternions sequence is given by the following:
\begin{align*}
g_{\emph{\B}_{J,n}^{(3)}}(t)&=\frac{\emph{\B}_{J,0}^{(3)}+\left(\emph{\B}_{J,1}^{(3)}-\emph{\B}_{J,0}^{(3)}\right)t+\left(\emph{\B}_{J,2}^{(3)}-\emph{\B}_{J,1}^{(3)}-\emph{\B}_{J,0}^{(3)}\right)t^{2}}{1-t-t^{2}-2t^{3}}\\
\end{align*}
\end{theorem}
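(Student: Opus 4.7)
The plan is to apply the standard generating-function manipulation: multiply the series $g_{\B_{J,n}^{(3)}}(t)$ by $1-t-t^{2}-2t^{3}$ and then exploit the recurrence (\ref{e8}) so that every coefficient of $t^{n}$ with $n\geq 3$ collapses to zero, leaving only a quadratic polynomial in $t$ determined by the three initial terms $\B_{J,0}^{(3)}$, $\B_{J,1}^{(3)}$, $\B_{J,2}^{(3)}$.

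Concretely, I would form the three shifted series $tg_{\B_{J,n}^{(3)}}(t)$, $t^{2}g_{\B_{J,n}^{(3)}}(t)$ and $2t^{3}g_{\B_{J,n}^{(3)}}(t)$, re-indexing each one so that the generic coefficient of $t^{n}$ equals $\B_{J,n-1}^{(3)}$, $\B_{J,n-2}^{(3)}$ and $2\B_{J,n-3}^{(3)}$, respectively. Subtracting these from $g_{\B_{J,n}^{(3)}}(t)$, the tail $\sum_{n\geq 3}\bigl(\B_{J,n}^{(3)}-\B_{J,n-1}^{(3)}-\B_{J,n-2}^{(3)}-2\B_{J,n-3}^{(3)}\bigr)t^{n}$ vanishes term by term by (\ref{e8}), and what remains is exactly $\B_{J,0}^{(3)}+\bigl(\B_{J,1}^{(3)}-\B_{J,0}^{(3)}\bigr)t+\bigl(\B_{J,2}^{(3)}-\B_{J,1}^{(3)}-\B_{J,0}^{(3)}\bigr)t^{2}$. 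Dividing through by $1-t-t^{2}-2t^{3}$ yields the claimed identity.

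There is essentially no mathematical obstacle here; the argument is pure bookkeeping on the indices of the shifted sums. The only points worth flagging are that $1-t-t^{2}-2t^{3}$ is invertible as a formal power series because its constant term equals $1$, and that $\mathbb{BC}$ is a commutative ring, so the manipulations above apply coefficient-wise to each of the four scalar components $J_{n}^{(3)}$, $J_{n+1}^{(3)}$, $J_{n+2}^{(3)}$, $J_{n+3}^{(3)}$ without any non-commutativity concerns. I would not attempt to expand the right-hand side further; the identity as a formal power series is precisely the statement of the theorem.
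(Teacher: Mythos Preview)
Your proposal is correct and matches the paper's own proof essentially line for line: the paper writes out the series, multiplies by $-t$, $-t^{2}$, and $-2t^{3}$, adds the four equations, and invokes the recurrence (\ref{e8}) to kill all coefficients of $t^{n}$ for $n\geq 3$. Your additional remarks about the invertibility of $1-t-t^{2}-2t^{3}$ as a formal power series and the commutativity of $\mathbb{BC}$ are extra care that the paper does not spell out.
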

\begin{proof}
Using (\ref{g1}), we have
\begin{equation}\label{g2}
g_{\B_{J,n}^{(3)}}(t)=\B_{J,0}^{(3)}+\B_{J,1}^{(3)}t+\B_{J,2}^{(3)}t^{2}+\cdots +\B_{J,n}^{(3)}t^{n}+\cdots .
\end{equation}
Multiplying both sides of (\ref{g2}) by $-t$, we obtain
\begin{equation}\label{g3}
-tg_{\B_{J,n}^{(3)}}(t)=-\B_{J,0}^{(3)}t-\B_{J,1}^{(3)}t^{2}-\B_{J,2}^{(3)}t^{3}-\cdots -\B_{J,n}^{(3)}t^{n+1}+\cdots .
\end{equation}
Now, multiplying both sides of (\ref{g2}) by $-t^{2}$ and $-2t^{3}$, we get
\begin{equation}\label{g4}
-t^{2}g_{\B_{J,n}^{(3)}}(t)=-\B_{J,0}^{(3)}t^{2}-\B_{J,1}^{(3)}t^{3}-\B_{J,2}^{(3)}t^{4}-\cdots -\B_{J,n}^{(3)}t^{n+2}+\cdots 
\end{equation}
and
\begin{equation}\label{g5}
-2t^{3}g_{\B_{J,n}^{(3)}}(t)=-2\B_{J,0}^{(3)}t^{3}-2\B_{J,1}^{(3)}t^{4}-2\B_{J,2}^{(3)}t^{5}-\cdots -2\B_{J,n}^{(3)}t^{n+3}+\cdots ,
\end{equation}
respectively. Adding (\ref{g2})-(\ref{g5}), and using (\ref{e8}), we have
$$(1-t-t^{2}-2t^{3})g_{\B_{J,n}^{(3)}}(t)=\B_{J,0}^{(3)}+\left(\B_{J,1}^{(3)}-\B_{J,0}^{(3)}\right)t+\left(\B_{J,2}^{(3)}-\B_{J,1}^{(3)}-\B_{J,0}^{(3)}\right)t^{2},$$
and the result follows.
\end{proof}

The following result, with easy proof, using Binet's formula of $J_{n}^{(3)}$ given by (\ref{e6}) and it will be useful in the statement of the Binet formula of $\B_{J,n}^{(3)}$.
\begin{lemma}\label{lem3}
Let $\{J_{n}^{(3)}\}_{n\geq0}$, $2$, $\omega_{1}$ and $\omega_{2}$ be as above. Then, we have for all integer $n\geq 0$
\begin{equation}\label{ap1}
\textrm{Quadratic app. of $\{J_{n}^{(3)}\}$}: \left\{
\begin{array}{c }
P2^{n+2}=2^{2} J_{n+2}^{(3)}+2(J_{n+1}^{(3)}+2J_{n}^{(3)})+ 2J_{n+1}^{(3)},\\
Q\omega_{1}^{n+2}=\omega_{1}^{2} J_{n+2}^{(3)}+\omega_{1}(J_{n+1}^{(3)}+2J_{n}^{(3)})+ 2J_{n+1}^{(3)},\\
R\omega_{2}^{n+2}=\omega_{2}^{2} J_{n+2}^{(3)}+\omega_{2}(J_{n+1}^{(3)}+2J_{n}^{(3)})+ 2J_{n+1}^{(3)},
\end{array}
\right.
\end{equation}
where $P=1-(\omega_{1}+\omega_{2})$, $Q=1-(\alpha+\omega_{2})$ and $R=1-(\alpha+\omega_{1})$. 
\end{lemma}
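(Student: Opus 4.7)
The plan is to prove all three identities uniformly. Set
\[F(x,n) := x^{2} J_{n+2}^{(3)} + x\left(J_{n+1}^{(3)} + 2 J_{n}^{(3)}\right) + 2 J_{n+1}^{(3)},\]
so that the lemma asserts $F(\alpha, n) = C_{\alpha}\cdot \alpha^{n+2}$ for each root $\alpha$ of the characteristic polynomial $x^{3}-x^{2}-x-2$, with constants $C_{2} = P$, $C_{\omega_{1}} = Q$, $C_{\omega_{2}} = R$ as in the statement.

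The key step is the uniform scaling identity $F(\alpha, n+1) = \alpha \cdot F(\alpha, n)$, valid for any $\alpha$ satisfying $\alpha^{3} = \alpha^{2} + \alpha + 2$. To verify it I would expand $\alpha\cdot F(\alpha, n)$, replace $\alpha^{3} J_{n+2}^{(3)}$ by $(\alpha^{2} + \alpha + 2) J_{n+2}^{(3)}$, and collect the $\alpha^{2}$-coefficient as $J_{n+2}^{(3)} + J_{n+1}^{(3)} + 2 J_{n}^{(3)} = J_{n+3}^{(3)}$ using the recurrence (\ref{e5}); the remaining terms reassemble precisely into $F(\alpha, n+1)$. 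Iterating yields $F(\alpha, n) = \alpha^{n} F(\alpha, 0)$, and plugging in $J_{0}^{(3)} = 0$, $J_{1}^{(3)} = J_{2}^{(3)} = 1$ produces the clean polynomial identity $F(x, 0) = x^{2} + x + 2$.

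To finish I would match constants using the factorization $x^{3} - x^{2} - x - 2 = (x - 2)(x^{2} + x + 1)$, which gives $\omega_{1} + \omega_{2} = -1$, $\omega_{1}\omega_{2} = 1$, and the crucial $\omega_{1}^{3} = \omega_{2}^{3} = 1$. Interpreting the symbol $\alpha$ in the definitions of $Q,R$ as the third root $2$, these relations yield $P = 2$, $Q = \omega_{1}$, $R = \omega_{2}$. For $\alpha = 2$: $F(2, 0) = 8 = P\cdot 2^{2}$, so $F(2, n) = P\cdot 2^{n+2}$. For $\alpha = \omega_{k}$ with $k \in \{1, 2\}$: $F(\omega_{k}, 0) = \omega_{k}^{2} + \omega_{k} + 2 = 1 = \omega_{k}^{3}$ (using $\omega_{k}^{2} + \omega_{k} + 1 = 0$), hence $F(\omega_{k}, n) = \omega_{k}^{n+3}$, which coincides with $Q\omega_{1}^{n+2}$ and $R\omega_{2}^{n+2}$ respectively.

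The main obstacle I expect is purely clerical: carrying out the scaling identity without sign errors, reinterpreting the ambiguous $\alpha$ in the definitions of $Q$ and $R$ as the root $2$, and correctly pairing $Q$ with $\omega_{1}$ and $R$ with $\omega_{2}$. Everything else is formal substitution, which is presumably why the author calls the proof ``easy''. As an alternative, one could substitute the Binet expansion $J_{m}^{(3)} = \tfrac{2}{7}\cdot 2^{m} - \tfrac{A\omega_{1}^{m}-B\omega_{2}^{m}}{7(\omega_{1}-\omega_{2})}$ from (\ref{e6}) directly into $F(\alpha, n)$ and collect coefficients of $2^{n}$, $\omega_{1}^{n}$, $\omega_{2}^{n}$; the three identities then fall out after using $\omega_{k}^{2} + \omega_{k} + 1 = 0$ and $\omega_{1}\omega_{2} = 1$.
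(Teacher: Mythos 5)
Your proposal is correct, and it supplies a complete argument where the paper supplies none: the paper merely announces the lemma ``with easy proof, using Binet's formula'' of (\ref{e6}) and never carries out the verification. Your primary route is also genuinely different from the one the paper hints at. Instead of substituting the Binet expansion and tracking the constants $A$, $B$, you set $F(x,n)=x^{2}J_{n+2}^{(3)}+x\bigl(J_{n+1}^{(3)}+2J_{n}^{(3)}\bigr)+2J_{n+1}^{(3)}$ and observe that $F(\alpha,n+1)=\alpha F(\alpha,n)$ for every root $\alpha$ of $x^{3}-x^{2}-x-2$; this is a one-line consequence of replacing $\alpha^{3}J_{n+2}^{(3)}$ by $(\alpha^{2}+\alpha+2)J_{n+2}^{(3)}$ and invoking the recurrence (\ref{e5}) to form $J_{n+3}^{(3)}$, and I have checked that the terms do reassemble as you claim. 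Everything then reduces to the base case $F(x,0)=x^{2}+x+2$, which gives $F(2,0)=8=P\cdot 2^{2}$ with $P=2=1-(\omega_{1}+\omega_{2})$ and $F(\omega_{k},0)=\omega_{k}^{2}+\omega_{k}+2=1=\omega_{k}^{3}$, matching $Q\omega_{1}^{n+2}$ and $R\omega_{2}^{n+2}$ once $Q=\omega_{1}$ and $R=\omega_{2}$. Your approach buys uniformity (all three roots are treated identically), avoids the constants $A$ and $B$ entirely, and makes the values of $P$, $Q$, $R$ transparent rather than emerging from a coefficient comparison. You also correctly diagnose and repair the notational slip in the statement: the symbol $\alpha$ in $Q=1-(\alpha+\omega_{2})$ and $R=1-(\alpha+\omega_{1})$ must be read as the root $2$ (so $Q=\omega_{1}$, $R=\omega_{2}$ via $\omega_{1}+\omega_{2}=-1$); without that reading the lemma is not even well posed. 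The Binet-substitution route you mention as an alternative is presumably what the author intends, but your induction is the cleaner proof.
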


The next result will be also used in the statement of the Binet formula for the bicomplex third-order Jacobsthal quaternion sequence.
\begin{theorem}\label{t3}
For the generating function given in Theorem \ref{t2}, we have
$$
g_{\emph{\B}_{J,n}^{(3)}}(t)=\frac{1}{\phi(J)}\left\lbrace
\begin{array}{c}
\left(\omega_{1}-\omega_{2}\right) \left(\frac{\B_{J,2}^{(3)}+\B_{J,1}^{(3)}+\B_{J,0}^{(3)}}{1-2t}\right)\\
-\left(2-\omega_{2}\right) \left(\frac{\B_{J,2}^{(3)}+(\omega_{1}-1)\B_{J,1}^{(3)}+(\omega_{1}^{2}-\omega_{1}-1)\B_{J,0}^{(3)}}{1-\omega_{1}t}\right)\\
+\left(2-\omega_{1}\right) \left(\frac{\B_{J,2}^{(3)}+(\omega_{2}-1)\B_{J,1}^{(3)}+(\omega_{2}^{2}-\omega_{2}-1)\B_{J,0}^{(3)}}{1-\omega_{2}t}\right)
\end{array}
\right\rbrace,
$$
where $\phi(J)=(2-\omega_{1})(2-\omega_{2})(\omega_{1}-\omega_{2})$.
\end{theorem}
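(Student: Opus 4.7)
The plan is to compute a partial-fraction expansion of the rational function obtained in Theorem \ref{t2}. The first step is to factor the denominator using the characteristic data of (\ref{e5}): since $\omega_{1}+\omega_{2}=-1$ and $\omega_{1}\omega_{2}=1$, one has $(1-\omega_{1}t)(1-\omega_{2}t)=1+t+t^{2}$, and hence $1-t-t^{2}-2t^{3}=(1-2t)(1-\omega_{1}t)(1-\omega_{2}t)$. Because $\B$ is a commutative ring and the three roots $2,\omega_{1},\omega_{2}$ are distinct, the partial-fraction ansatz
\[
g_{\B_{J,n}^{(3)}}(t)=\frac{A}{1-2t}+\frac{B}{1-\omega_{1}t}+\frac{C}{1-\omega_{2}t}
\]
with coefficients $A,B,C\in\B$ is valid, and each can be isolated by the cover-up rule.

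Next I would evaluate these three coefficients. Writing $N(t)$ for the numerator appearing in Theorem \ref{t2}, set $A=N(1/2)/[(1-\omega_{1}/2)(1-\omega_{2}/2)]$, and analogously $B$ at $t=1/\omega_{1}$ and $C$ at $t=1/\omega_{2}$. For $A$, the numerator collapses to $\tfrac{1}{4}(\B_{J,2}^{(3)}+\B_{J,1}^{(3)}+\B_{J,0}^{(3)})$ and the denominator to $7/4$, via $(2-\omega_{1})(2-\omega_{2})=4-2(\omega_{1}+\omega_{2})+\omega_{1}\omega_{2}=7$. For $B$, clearing the common factor $\omega_{1}^{-2}$ from numerator and denominator produces
\[
B=\frac{\B_{J,2}^{(3)}+(\omega_{1}-1)\B_{J,1}^{(3)}+(\omega_{1}^{2}-\omega_{1}-1)\B_{J,0}^{(3)}}{(\omega_{1}-2)(\omega_{1}-\omega_{2})},
\]
and symmetrically for $C$ (with indices $1,2$ swapped and the factor $(\omega_{2}-\omega_{1})$ in the denominator).

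The final step is cosmetic: recasting all three terms over the common denominator $\phi(J)=(2-\omega_{1})(2-\omega_{2})(\omega_{1}-\omega_{2})$. For $A$, multiplying numerator and denominator by $\omega_{1}-\omega_{2}$ recovers the stated first bracketed term $(\omega_{1}-\omega_{2})(\B_{J,2}^{(3)}+\B_{J,1}^{(3)}+\B_{J,0}^{(3)})/\phi(J)$. For $B$, rewriting $(\omega_{1}-2)(\omega_{1}-\omega_{2})=-\phi(J)/(2-\omega_{2})$ turns the coefficient into $-(2-\omega_{2})/\phi(J)$ times its numerator, and the same manipulation yields $+(2-\omega_{1})/\phi(J)$ for $C$. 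Combining the three contributions reproduces the displayed identity.

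The main obstacle is nothing conceptual, only careful sign-tracking when converting $(\omega_{i}-2)(\omega_{i}-\omega_{j})$ into a multiple of $\phi(J)$, together with the discipline of \emph{not} simplifying the quadratic $\omega_{i}^{2}-\omega_{i}-1$ further via the characteristic relation $\omega_{i}^{3}-\omega_{i}^{2}-\omega_{i}=2$, which would obscure the stated form. Since $\B$ is a commutative algebra and the factorization of the denominator lives in $\C[t]$, no complication from noncommutativity enters the argument.
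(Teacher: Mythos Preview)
Your argument is correct and follows essentially the same route as the paper: factor $1-t-t^{2}-2t^{3}=(1-2t)(1-\omega_{1}t)(1-\omega_{2}t)$ via the Vieta relations $\omega_{1}+\omega_{2}=-1$, $\omega_{1}\omega_{2}=1$, and then carry out a partial-fraction decomposition over the common denominator $\phi(J)$. The only difference is expository: where the paper simply asserts that ``multiplying and dividing the right side by $(2-\omega_{1})(2-\omega_{2})$, $(2-\omega_{1})(\omega_{1}-\omega_{2})$ and $(2-\omega_{2})(\omega_{1}-\omega_{2})$'' yields the result, you make the partial-fraction mechanics explicit via the cover-up rule and verify each residue, which is arguably clearer.
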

\begin{proof}
From the expression $g_{\B_{J,n}^{(3)}}(t)$ given by Theorem \ref{t2} and by the use of the fact that $\omega_{1}+\omega_{2}=-1$ and $\omega_{1}\omega_{2}=1$, we have
\begin{align*}
g_{\B_{J,n}^{(3)}}(t)&=\frac{\B_{J,0}^{(3)}+\left(\B_{J,1}^{(3)}-\B_{J,0}^{(3)}\right)t+\left(\B_{J,2}^{(3)}-\B_{J,1}^{(3)}-\B_{J,0}^{(3)}\right)t^{2}}{1-t-t^{2}-2t^{3}}\\
&=\frac{\B_{J,0}^{(3)}+\left(\B_{J,1}^{(3)}-\B_{J,0}^{(3)}\right)t+\left(\B_{J,2}^{(3)}-\B_{J,1}^{(3)}-\B_{J,0}^{(3)}\right)t^{2}}{1-(2+\omega_{1}+\omega_{2})t+(2\omega_{1}+2\omega_{2}+\omega_{1}\omega_{2})t^{2}-2\omega_{1}\omega_{2}t^{3}}\\
&=\frac{\B_{J,0}^{(3)}+\left(\B_{J,1}^{(3)}-\B_{J,0}^{(3)}\right)t+\left(\B_{J,2}^{(3)}-\B_{J,1}^{(3)}-\B_{J,0}^{(3)}\right)t^{2}}{(1-2t)(1-\omega_{1}t)(1-\omega_{2}t)}.
\end{align*}
Now, multiplying and dividing the right side of this expression by $(2-\omega_{1})(2-\omega_{2})$, $(2-\omega_{1})(\omega_{1}-\omega_{2})$ and $(2-\omega_{2})(\omega_{1}-\omega_{2})$, respectively, the result follows.
\end{proof}

The next result gives the Binet formula for the bicomplex third-order Jacobsthal quaternion sequence.
\begin{theorem}\label{t4}
For $n\geq 0$, we have
$$\emph{\B}_{J,n}^{(3)}=\frac{1}{\phi(J)}\left(\left(\omega_{1}-\omega_{2}\right) \widehat{2}2^{n+1}-\left(2-\omega_{2}\right) \widehat{\omega_{1}}\omega_{1}^{n+1}+\left(2-\omega_{1}\right) \widehat{\omega_{2}}\omega_{2}^{n+1}\right),$$
where $\widehat{2}$, $\widehat{\omega_{1}}$ and $\widehat{\omega_{2}}$ are bicomplex quaternions defined by $\widehat{2}=1+2i+4j+8ij$, $\widehat{\omega_{1}}=1+i\omega_{1}+j\omega_{1}^{2}+ij\omega_{1}^{3}$ and $\widehat{\omega_{2}}=1+i\omega_{2}+j\omega_{2}^{2}+ij\omega_{2}^{3}$, respectively.
\end{theorem}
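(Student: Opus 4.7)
The plan is to read off $\B_{J,n}^{(3)}$ as the coefficient of $t^n$ in the partial-fraction expansion of the generating function provided by Theorem \ref{t3}. Each of the three summands there has the shape (bicomplex constant)$/(1-\alpha t)$ with $\alpha\in\{2,\omega_1,\omega_2\}$, so expanding $(1-\alpha t)^{-1}=\sum_{n\geq 0}\alpha^n t^n$ and matching coefficients of $t^n$ against $\sum_{n\geq 0}\B_{J,n}^{(3)}t^n$ immediately yields
\[
\B_{J,n}^{(3)}=\frac{1}{\phi(J)}\Bigl[(\omega_1-\omega_2)\,U\,2^n-(2-\omega_2)\,V_1\,\omega_1^n+(2-\omega_1)\,V_2\,\omega_2^n\Bigr],
\]
where $U=\B_{J,2}^{(3)}+\B_{J,1}^{(3)}+\B_{J,0}^{(3)}$ and $V_k=\B_{J,2}^{(3)}+(\omega_k-1)\B_{J,1}^{(3)}+(\omega_k^2-\omega_k-1)\B_{J,0}^{(3)}$ for $k=1,2$. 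Matching this with the target then reduces to the three identities $U=2\widehat{2}$, $V_1=\omega_1\widehat{\omega_1}$, $V_2=\omega_2\widehat{\omega_2}$, after which the shifts $2\cdot 2^n=2^{n+1}$ and $\omega_k\cdot\omega_k^n=\omega_k^{n+1}$ deliver the stated formula.

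For $U$, substituting the initial values $\B_{J,0}^{(3)}=i+j+2ij$, $\B_{J,1}^{(3)}=1+i+2j+5ij$, $\B_{J,2}^{(3)}=1+2i+5j+9ij$ gives $U=2+4i+8j+16ij=2\widehat{2}$ in one line. For each $V_k$ I would expand the four scalar coefficients (of $1$, $i$, $j$, $ij$) separately; the first two collapse to $\omega_k$ and $\omega_k^2$ immediately, while the $j$- and $ij$-coefficients come out as $\omega_k^2+\omega_k+2$ and $2\omega_k^2+3\omega_k+2$. Using the minimal polynomial identity $\omega_k^2+\omega_k+1=0$, these reduce to $1=\omega_k^3$ and $\omega_k=\omega_k^4$ respectively, so $V_k=\omega_k+\omega_k^2 i+\omega_k^3 j+\omega_k^4 ij=\omega_k(1+\omega_k i+\omega_k^2 j+\omega_k^3 ij)=\omega_k\widehat{\omega_k}$, as required.

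The only real obstacle is this last reduction for $V_k$: it depends essentially on $\omega_k$ being a root of $x^2+x+1$, and one must track the signs and the three initial-value contributions carefully through the factors $(\omega_k-1)$ and $(\omega_k^2-\omega_k-1)$. As a cleaner alternative (and a useful sanity check) I would first rewrite the scalar Binet formula (\ref{e6}) in the form $J_n^{(3)}=\frac{1}{\phi(J)}\bigl((\omega_1-\omega_2)2^{n+1}-(2-\omega_2)\omega_1^{n+1}+(2-\omega_1)\omega_2^{n+1}\bigr)$, using $\phi(J)=(2-\omega_1)(2-\omega_2)(\omega_1-\omega_2)=7(\omega_1-\omega_2)$ together with $-(2-\omega_2)\omega_1=3+2\omega_2=-A$ and its mirror for the $\omega_2$-term, and then substitute directly into $\B_{J,n}^{(3)}=J_n^{(3)}+iJ_{n+1}^{(3)}+jJ_{n+2}^{(3)}+ijJ_{n+3}^{(3)}$; factoring $\alpha^{n+1}$ out of each of the three grouped sums makes $\widehat{2}$, $\widehat{\omega_1}$, $\widehat{\omega_2}$ appear automatically.
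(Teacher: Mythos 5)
Your proposal is correct and follows essentially the same route as the paper: both extract $\B_{J,n}^{(3)}$ as the coefficient of $t^n$ from the partial-fraction form of the generating function in Theorem \ref{t3}. The only difference is that you explicitly verify the identities $\B_{J,2}^{(3)}+\B_{J,1}^{(3)}+\B_{J,0}^{(3)}=2\widehat{2}$ and $\B_{J,2}^{(3)}+(\omega_{k}-1)\B_{J,1}^{(3)}+(\omega_{k}^{2}-\omega_{k}-1)\B_{J,0}^{(3)}=\omega_{k}\widehat{\omega_{k}}$ (using $\omega_{k}^{2}+\omega_{k}+1=0$), which the paper asserts without computation; your verification is correct.
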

\begin{proof}
Using (\ref{e5}), the Binet formula for the third-order Jacobsthal numbers considered in (\ref{e6}) and the Definition \ref{d1}, we have
\begin{align*}
&g_{\B_{J,n}^{(3)}}(t)\\
&=\frac{1}{\phi(J)}\left\lbrace
\begin{array}{c}
\left(\omega_{1}-\omega_{2}\right) \left(\B_{J,2}^{(3)}+\B_{J,1}^{(3)}+\B_{J,0}^{(3)}\right)\sum_{n=0}^{\infty}2^{n}t^{n}\\
-\left(2-\omega_{2}\right) \left(\B_{J,2}^{(3)}+(\omega_{1}-1)\B_{J,1}^{(3)}+(\omega_{1}^{2}-\omega_{1}-1)\B_{J,0}^{(3)}\right)\sum_{n=0}^{\infty}\omega_{1}^{n}t^{n}\\
+\left(2-\omega_{1}\right) \left(\B_{J,2}^{(3)}+(\omega_{2}-1)\B_{J,1}^{(3)}+(\omega_{2}^{2}-\omega_{2}-1)\B_{J,0}^{(3)}\right)\sum_{n=0}^{\infty}\omega_{2}^{n}t^{n}
\end{array}
\right\rbrace\\
&=\frac{1}{\phi(J)}\left\lbrace
\begin{array}{c}
\left(\omega_{1}-\omega_{2}\right) \widehat{2}\sum_{n=0}^{\infty}2^{n+1}t^{n}-\left(2-\omega_{2}\right) \widehat{\omega_{1}}\sum_{n=0}^{\infty}\omega_{1}^{n+1}t^{n}\\
+\left(2-\omega_{1}\right) \widehat{\omega_{2}}\sum_{n=0}^{\infty}\omega_{2}^{n+1}t^{n}
\end{array}
\right\rbrace\\
&=\frac{1}{\phi(J)}\sum_{n=0}^{\infty}\left\lbrace
\begin{array}{c}
\left(\omega_{1}-\omega_{2}\right) \widehat{2}2^{n+1}-\left(2-\omega_{2}\right) \widehat{\omega_{1}}\omega_{1}^{n+1}+\left(2-\omega_{1}\right) \widehat{\omega_{2}}\omega_{2}^{n+1}
\end{array}
\right\rbrace t^{n},
\end{align*}
and by (\ref{g1}), the result easily follows.
\end{proof}

\section{Some Identities Involving These Sequences} 
As a consequence of the Binet formulae of Theorem \ref{t4}, we get, for the sequence of bicomplex third-order Jacobsthal quaternions, the following interesting identities.
\begin{proposition}[d'Ocagne's identity]
Suppose that $n$ is a non-negative integer number and $m$ any natural number. If $m > n$ and $\emph{\B}_{J,n}^{(3)}$ is the $n$-th bicomplex third-order Jacobsthal quaternion, then the expression of the d'Ocagne's identity is given by the following:
\begin{align*}
\emph{\B}_{J,m}^{(3)}\times \emph{\B}_{J,n+1}^{(3)}-\emph{\B}_{J,m+1}^{(3)}\times \emph{\B}_{J,n}^{(3)}=\frac{1}{7}\left\lbrace
\begin{array}{c} 
\widehat{2}2^{m+1}\B_{U,n+1}^{(3)}-\widehat{2}2^{n+1}\B_{U,m+1}^{(3)}\\
+\widehat{\omega_{1}}\widehat{\omega_{2}}U_{m-n}^{(3)}
\end{array}
\right\rbrace,
\end{align*}
where $\emph{\B}_{U,n}^{(3)}=U_{n}^{(3)}+iU_{n+1}^{(3)}+jU_{n+2}^{(3)}+ijU_{n+3}^{(3)}$ and $U_{n}^{(3)}=\frac{1}{7}(2V_{n}^{(3)}-V_{n+1}^{(3)})$ as in (\ref{e6}), and $\widehat{2}$, $\widehat{\omega_{1}}$ and $\widehat{\omega_{2}}$ are the bicomplex quaternions defined in Theorem \ref{t4}.
\end{proposition}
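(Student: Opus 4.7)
The plan is to replace every bicomplex third-order Jacobsthal quaternion on the left-hand side by its Binet representation from Theorem~\ref{t4} and then let the algebra do the work. Writing
$$\phi(J)\,\B_{J,k}^{(3)}=(\omega_{1}-\omega_{2})\widehat{2}\cdot 2^{k+1}-(2-\omega_{2})\widehat{\omega_{1}}\omega_{1}^{k+1}+(2-\omega_{1})\widehat{\omega_{2}}\omega_{2}^{k+1},$$
I will expand the two products $\B_{J,m}^{(3)}\times \B_{J,n+1}^{(3)}$ and $\B_{J,m+1}^{(3)}\times \B_{J,n}^{(3)}$, using the fact (established in Section~2) that bicomplex quaternions commute under multiplication, and then subtract.

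The first payoff is that the three ``diagonal'' contributions, proportional to $\widehat{2}^{2}\cdot 2^{m+n+3}$, $\widehat{\omega_{1}}^{2}\omega_{1}^{m+n+3}$ and $\widehat{\omega_{2}}^{2}\omega_{2}^{m+n+3}$, appear identically in both products and cancel in the difference. For each of the three cross pairs $(2,\omega_{1})$, $(2,\omega_{2})$, $(\omega_{1},\omega_{2})$ the four surviving monomials collapse via the elementary identity
$$x^{m+1}y^{n+2}+x^{n+2}y^{m+1}-x^{m+2}y^{n+1}-x^{n+1}y^{m+2}=(y-x)(xy)^{n+1}\bigl(x^{m-n}-y^{m-n}\bigr).$$

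At this point I would invoke the two arithmetic identities $\omega_{1}\omega_{2}=1$ and $(2-\omega_{1})(2-\omega_{2})=7$ (so that $\phi(J)=7(\omega_{1}-\omega_{2})$) in order to clear one factor of $\phi(J)$ and pull an overall $\tfrac{1}{7}$ out in front. The $(2,\omega_{1})$ and $(2,\omega_{2})$ contributions then reorganise as $\widehat{2}\cdot 2^{m+1}$ and $\widehat{2}\cdot 2^{n+1}$ multiplied by Binet-style sums of the form $(\widehat{\omega_{1}}\omega_{1}^{k+1}-\widehat{\omega_{2}}\omega_{2}^{k+1})/(\omega_{1}-\omega_{2})$, which, after expanding $\widehat{\omega_{1}}$ and $\widehat{\omega_{2}}$ along their four basis components, I would identify with the bicomplex quaternion $\B_{U,\cdot}^{(3)}$. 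The $(\omega_{1},\omega_{2})$ contribution, whose prefactor is $(\omega_{1}\omega_{2})^{n+1}=1$, reduces cleanly to $\widehat{\omega_{1}}\widehat{\omega_{2}}$ times a scalar Binet combination in $\omega_{1},\omega_{2}$ of the form $(\omega_{1}^{m-n}-\omega_{2}^{m-n})/(\omega_{1}-\omega_{2})$, giving the scalar $U_{\cdot}^{(3)}$.

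The main obstacle will be bookkeeping rather than insight: one has to verify, using the factorisation $(-3-2\omega_{2})(2-\omega_{1})=7\omega_{1}$ and its symmetric partner, that the closed form $U_{n}^{(3)}=\tfrac{1}{7}(2V_{n}^{(3)}-V_{n+1}^{(3)})$ agrees with the exponential Binet combination $(\omega_{1}^{n+1}-\omega_{2}^{n+1})/(\omega_{1}-\omega_{2})$ that falls out of the expansion, and correspondingly that $\B_{U,n}^{(3)}=(\widehat{\omega_{1}}\omega_{1}^{n+1}-\widehat{\omega_{2}}\omega_{2}^{n+1})/(\omega_{1}-\omega_{2})$; once these identifications are in place the indices should align with the statement of the proposition, and keeping the signs straight across the three cross expansions will be the only delicate part.
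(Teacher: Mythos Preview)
Your proposal is correct and follows essentially the same route as the paper: both substitute the Binet representation of Theorem~\ref{t4}, exploit the commutativity of the bicomplex product, and use $\omega_{1}\omega_{2}=1$ together with $(2-\omega_{1})(2-\omega_{2})=7$ to collapse the d'Ocagne difference into the $U$- and $\B_{U}$-terms. The only organisational difference is that the paper first condenses the Binet formula into the two-term form $\B_{J,n}^{(3)}=\tfrac{1}{7}\bigl(\widehat{2}\,2^{n+1}-\B_{V,n}^{(3)}\bigr)$ before expanding, so your three cross-pair computations $(2,\omega_{1})$, $(2,\omega_{2})$, $(\omega_{1},\omega_{2})$ are replaced by a single $(2,V)$ cross term plus a direct evaluation of $\B_{V,m}^{(3)}\B_{V,n+1}^{(3)}-\B_{V,m+1}^{(3)}\B_{V,n}^{(3)}$.
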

\begin{proof}
Once more, using the Binet Formula of Theorem \ref{t4}, (\ref{e6}) and the fact that $\omega_{1}\omega_{2}=1$, we have
\begin{align*}
\B_{J,n}^{(3)}&=\frac{1}{\phi(J)}\left(\left(\omega_{1}-\omega_{2}\right) \widehat{2}2^{n+1}-\left(2-\omega_{2}\right) \widehat{\omega_{1}}\omega_{1}^{n+1}+\left(2-\omega_{1}\right) \widehat{\omega_{2}}\omega_{2}^{n+1}\right)\\
&=\frac{1}{7}\left(\widehat{2}2^{n+1}-\B_{V,n}^{(3)}\right).
\end{align*}
Then,
\begin{align*}
&\emph{\B}_{J,m}^{(3)}\times \emph{\B}_{J,n+1}^{(3)}-\emph{\B}_{J,m+1}^{(3)}\times \emph{\B}_{J,n}^{(3)}\\
&=\frac{1}{49}\left\lbrace
\begin{array}{c} 
\left(\widehat{2}2^{m+1}-\B_{V,m}^{(3)}\right)\left(\widehat{2}2^{n+2}-\B_{V,n+1}^{(3)}\right)\\
-\left(\widehat{2}2^{m+2}-\B_{V,m+1}^{(3)}\right)\left(\widehat{2}2^{n+1}-\B_{V,n}^{(3)}\right)\end{array}
\right\rbrace\\
&=\frac{1}{49}\left\lbrace
\begin{array}{c} 
-\widehat{2}2^{m+1}\left(\B_{V,n+1}^{(3)}-2\B_{V,n}^{(3)}\right)+\widehat{2}2^{n+1}\left(\B_{V,m+1}^{(3)}-2\B_{V,m}^{(3)}\right)\\
+\B_{V,m}^{(3)}\B_{V,n+1}^{(3)}-\B_{V,m+1}^{(3)}\B_{V,n}^{(3)}\end{array}
\right\rbrace\\
&=\frac{1}{7}\left(
\widehat{2}2^{m+1}\B_{U,n+1}^{(3)}-\widehat{2}2^{n+1}\B_{U,m+1}^{(3)}+\widehat{\omega_{1}}\widehat{\omega_{2}}U_{m-n}^{(3)}
\right),
\end{align*}
where $$\B_{V,n}^{(3)}=\left\{ 
\begin{array}{ccc}
2-3i+j+2ij & \textrm{if} & \mymod{n}{0}{3} \\ 
-3+i+2j-3ij & \textrm{if} & \mymod{n}{1}{3} \\ 
1+2i-3j+ij & \textrm{if} & \mymod{n}{2}{3}
\end{array}
\right. ,$$
and now, using the Binet formula for the numbers $U_{n}^{(3)}=\frac{1}{7}(2V_{n}^{(3)}-V_{n+1}^{(3)})$ as in (\ref{e6}), the result follows.
\end{proof}
 
For $m=n+1$ in d'Ocagne's identity, the Cassini-like identity for this sequence is stated in the next result.
\begin{proposition}[Cassini-like identity]
For a natural number $n$, if $\emph{\B}_{J,n}^{(3)}$ is the $n$-th bicomplex third-order Jacobsthal quaternion, then the following identity is true:
\begin{align*}
\left(\emph{\B}_{J,n+1}^{(3)}\right)^{2}-\emph{\B}_{J,n+2}^{(3)}\times \emph{\B}_{J,n}^{(3)}=\frac{1}{7}\left\lbrace
\begin{array}{c} 
\widehat{2}2^{n+1}\left(2\B_{U,n+1}^{(3)}-\B_{U,n+2}^{(3)}\right)+\widehat{\omega_{1}}\widehat{\omega_{2}}
\end{array}
\right\rbrace,
\end{align*}
where $\widehat{2}$, $\widehat{\omega_{1}}$ and $\widehat{\omega_{2}}$ are the bicomplex quaternions defined in Theorem \ref{t4}.
\end{proposition}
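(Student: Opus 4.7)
The plan is to obtain the Cassini-like identity as the direct specialization of d'Ocagne's identity at $m=n+1$, exploiting the commutativity of the bicomplex product recorded in~(\ref{e10}).

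First I would set $m=n+1$ in the d'Ocagne identity. On the left-hand side, $\B_{J,m}^{(3)}\times \B_{J,n+1}^{(3)}$ becomes $\B_{J,n+1}^{(3)}\times \B_{J,n+1}^{(3)}=\bigl(\B_{J,n+1}^{(3)}\bigr)^{2}$, and $\B_{J,m+1}^{(3)}\times \B_{J,n}^{(3)}$ becomes $\B_{J,n+2}^{(3)}\times \B_{J,n}^{(3)}$. This is precisely the left-hand side of the Cassini-like identity, so no further manipulation of the LHS is needed; the hypothesis $m>n$ of d'Ocagne is satisfied since $n+1>n$ for every $n\geq 0$.

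Next I would simplify the right-hand side. Because $2^{m+1}=2^{n+2}=2\cdot 2^{n+1}$, the two $\widehat{2}$-terms in the d'Ocagne identity combine cleanly into
$$\widehat{2}\,2^{n+1}\bigl(2\B_{U,n+1}^{(3)}-\B_{U,n+2}^{(3)}\bigr),$$
which already matches the first bracketed term of the claimed Cassini-like formula. The remaining summand $\widehat{\omega_{1}}\widehat{\omega_{2}}U_{m-n}^{(3)}$ specializes to $\widehat{\omega_{1}}\widehat{\omega_{2}}U_{1}^{(3)}$, and the scalar $U_{1}^{(3)}$ is computed from $U_n^{(3)}=\frac{1}{7}\bigl(2V_n^{(3)}-V_{n+1}^{(3)}\bigr)$ using the explicit values of $V_n^{(3)}$ listed in~(\ref{ev}).

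The only delicate point is the bookkeeping in the last step: one must confirm that $\widehat{\omega_{1}}\widehat{\omega_{2}}U_{1}^{(3)}$ reduces to the unadorned $\widehat{\omega_{1}}\widehat{\omega_{2}}$ appearing in the statement. Beyond this scalar verification, no new identity is required; the proof is purely a substitution into the previous proposition, and so I expect the main obstacle to be notational rather than genuinely algebraic.
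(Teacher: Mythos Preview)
Your approach is exactly the paper's: the Cassini-like identity is presented there without a separate proof, merely as the specialization $m=n+1$ of d'Ocagne's identity, which is precisely the substitution you carry out. Regarding the ``delicate point'' you flag, plugging the stated definition $U_{n}^{(3)}=\frac{1}{7}(2V_{n}^{(3)}-V_{n+1}^{(3)})$ and the values in~(\ref{ev}) gives $U_{1}^{(3)}=\frac{1}{7}(2(-3)-1)=-1$, so the substitution actually produces $-\widehat{\omega_{1}}\widehat{\omega_{2}}$ rather than $+\widehat{\omega_{1}}\widehat{\omega_{2}}$; this reflects a minor sign/index inconsistency in the paper's printed formula, not a flaw in your method.
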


\begin{proposition}
For natural number $n$, if $\emph{\B}_{J,n}^{(3)}$ is the $n$-th bicomplex third-order Jacobsthal quaternion, then the following identity is true:
$$\left(\emph{\B}_{J,n}^{(3)}\right)^{2}+\left(\emph{\B}_{J,n+1}^{(3)}\right)^{2}+\left(\emph{\B}_{J,n+2}^{(3)}\right)^{2}=\frac{1}{7}\left(
3\cdot \widehat{2}^{2}2^{2n+2}-\widehat{2}2^{n+2}\emph{\B}_{U,n}^{(3)}+2ij
\right),$$
where $\emph{\B}_{U,n}^{(3)}=U_{n}^{(3)}+iU_{n+1}^{(3)}+jU_{n+2}^{(3)}+ijU_{n+3}^{(3)}$ and $U_{n}^{(3)}=\frac{1}{7}(2V_{n}^{(3)}-V_{n+1}^{(3)})$ as in (\ref{e6}), and $\widehat{2}$, $\widehat{\omega_{1}}$ and $\widehat{\omega_{2}}$ are the bicomplex quaternions defined in Theorem \ref{t4}.
\end{proposition}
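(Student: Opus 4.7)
My plan is to mirror the strategy of the d'Ocagne proof above by exploiting the Binet-style rewriting
\[
\B_{J,n}^{(3)}\;=\;\tfrac{1}{7}\bigl(\widehat{2}\,2^{n+1}-\B_{V,n}^{(3)}\bigr)
\]
that was established there. Squaring and summing the analogous identities for $n$, $n+1$, $n+2$ decomposes the left-hand side into three natural pieces, each of which can be handled with the structural facts about $V_n^{(3)}$ recorded in (\ref{ev}) together with the symmetric function values $\omega_1+\omega_2=-1$, $\omega_1\omega_2=1$.

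The purely exponential piece is immediate: $2^{2n+2}+2^{2n+4}+2^{2n+6}=21\cdot 2^{2n+2}$ contributes $\tfrac{3}{7}\widehat{2}^{\,2}\,2^{2n+2}$ after dividing by $49$, matching the first term of the right-hand side. The quadratic remainder $\sum_{k=0}^{2}\bigl(\B_{V,n+k}^{(3)}\bigr)^{2}$ is $n$-independent by the periodicity $V_{n+3}^{(3)}=V_n^{(3)}$, so I would evaluate it once at $n=0$ from the explicit values $\B_{V,0}^{(3)}=2-3i+j+2ij$, $\B_{V,1}^{(3)}=-3+i+2j-3ij$ and $\B_{V,2}^{(3)}=1+2i-3j+ij$ recorded inside the d'Ocagne proof; a direct squaring calculation produces $14\,ij$, and a short separate check using $\omega_{1}^{3}=\omega_{2}^{3}=1$ identifies this constant with $2\widehat{\omega_1}\widehat{\omega_2}$, i.e.\ exactly the $2ij$ of the statement. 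For the mixed piece, I would group
\[
2^{n+1}\bigl(\B_{V,n}^{(3)}+2\B_{V,n+1}^{(3)}+4\B_{V,n+2}^{(3)}\bigr)
\]
and reduce the bracket to a multiple of the appropriate $\B_U$ quaternion, using the scalar relation $7U_n^{(3)}=2V_n^{(3)}-V_{n+1}^{(3)}$ inherited from the Binet formulas, together with the three-term recurrence $V_{n+2}^{(3)}+V_{n+1}^{(3)}+V_n^{(3)}=0$ to eliminate $V_{n+2}^{(3)}$.

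The main obstacle is that middle step: there is no obvious reason the bracket should collapse to a single $\B_U$ expression, and the simplification goes through only because of the specific alignment of the geometric factor $2$ with the characteristic roots $\omega_1,\omega_2$. My preferred route is to perform the reduction at the scalar level first, so that the bicomplex identity lifts coordinatewise and one never has to split into the three residue classes of $n\pmod 3$ dictated by (\ref{ev}); the three pieces are then assembled and divided by $49$ to match the right-hand side of the statement.
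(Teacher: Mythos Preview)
Your proposal is correct and follows essentially the same route as the paper's own proof: both start from the rewriting $\B_{J,n}^{(3)}=\tfrac{1}{7}\bigl(\widehat{2}\,2^{n+1}-\B_{V,n}^{(3)}\bigr)$, expand the three squares, and split the result into the pure $2^{2n}$ piece (yielding the factor $21$), the cross piece $-\widehat{2}\,2^{n+2}\bigl(\B_{V,n}^{(3)}+2\B_{V,n+1}^{(3)}+4\B_{V,n+2}^{(3)}\bigr)$ (reduced via $V_{n+2}^{(3)}+V_{n+1}^{(3)}+V_{n}^{(3)}=0$ and $7U_n^{(3)}=2V_n^{(3)}-V_{n+1}^{(3)}$ to the $\B_{U}$ term), and the periodic piece $\sum_{k=0}^{2}\bigl(\B_{V,n+k}^{(3)}\bigr)^{2}=14ij$. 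Your added remark that $2ij=2\,\widehat{\omega_{1}}\widehat{\omega_{2}}$ (from $\omega_{1}^{3}=\omega_{2}^{3}=1$) is a pleasant extra that the paper does not spell out.
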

\begin{proof}
By the use of the Binet Formula of Theorem \ref{t4}, the fact that the product of any two bicomplex third-order Jacosbthal quaternions is commutative and the identities $\omega_{1}\omega_{2}=1$ and $\omega_{1}+\omega_{2}=-1$, we have
\begin{align*}
&\left(\emph{\B}_{J,n}^{(3)}\right)^{2}+\left(\emph{\B}_{J,n+1}^{(3)}\right)^{2}+\left(\emph{\B}_{J,n+2}^{(3)}\right)^{2}\\
&=\frac{1}{49}\left\lbrace
\begin{array}{c} 
\left(\widehat{2}2^{n+1}-\emph{\B}_{V,n}^{(3)}\right)^{2}+\left(\widehat{2}2^{n+2}-\emph{\B}_{V,n+1}^{(3)}\right)^{2}+\left(\widehat{2}2^{n+3}-\emph{\B}_{V,n+2}^{(3)}\right)^{2}
\end{array}
\right\rbrace\\
&=\frac{1}{49}\left\lbrace
\begin{array}{c} 
\widehat{2}^{2}2^{2n+2}-\widehat{2}2^{n+2}\emph{\B}_{V,n}^{(3)}+\widehat{2}^{2}2^{2n+4}-\widehat{2}2^{n+3}\emph{\B}_{V,n+1}^{(3)}+\widehat{2}^{2}2^{2n+6}\\
-\widehat{2}2^{n+4}\emph{\B}_{V,n+2}^{(3)}+\left(\emph{\B}_{V,n}^{(3)}\right)^{2}+\left(\emph{\B}_{V,n+1}^{(3)}\right)^{2}+\left(\emph{\B}_{V,n+2}^{(3)}\right)^{2}
\end{array}
\right\rbrace\\
&=\frac{1}{49}\left\lbrace
\begin{array}{c} 
21\cdot \widehat{2}^{2}2^{2n+2}-\widehat{2}2^{n+2}\left(\emph{\B}_{V,n}^{(3)}+2\emph{\B}_{V,n+1}^{(3)}+4\emph{\B}_{V,n+2}^{(3)}\right)+14ij
\end{array}
\right\rbrace\\
&=\frac{1}{7}\left(
3\cdot \widehat{2}^{2}2^{2n+2}-\widehat{2}2^{n+2}\emph{\B}_{U,n}^{(3)}+2ij
\right).
\end{align*}
Now, using the Binet formula (\ref{e6}) and (\ref{ev}), the result follows.
\end{proof}

\section{Four-diagonal matrix with bicomplex third-order quaternions}
Now, we present another way to obtain the $n$-th term of the bicomplex third-order quaternion sequence as the computation of a tridiagonal matrix. We shall adopt the ideas stated in \cite[p. 277]{Cer} using the following result which is stated in such work:
\begin{theorem}\label{t6}
Let $\{x_{n}\}_{n\geq1}$ be any third-order linear sequence defined recursively by the following: $$x_{n+3}=rx_{n+2}+sx_{n+1}+tx_{n},\  n\geq 0,$$ with $x_{0}=A\neq 0$, $x_{1}=B$ and $x_{2}=C$. Then, for all $n\geq 0$:
\begin{equation} \label{f1}
x_{n}= \left| \begin{matrix}
A  & 1     & 0      &   \cdots & \cdots     & \cdots     & 0  \\
Ar-B  & r & \frac{1}{A} & 0 &\cdots &\cdots        & 0 \\
0 & Br-C & r & t & \ddots & \cdots & 0    \\
0  &A & -\frac{s}{t}  & r & t& \ddots & 0  \\
0  &0 & \frac{1}{t}  & -\frac{s}{t} & r & &   \\
\vdots &  \vdots &     & \ddots & \ddots & \ddots & t \\
0 & \cdots    & &     0   & \frac{1}{t}     & -\frac{s}{t}     & r\end{matrix} \right|_{(n+1)\times (n+1)}
\end{equation}
\end{theorem}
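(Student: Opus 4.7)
The plan is to let $D_{n}$ denote the value of the $(n+1)\times(n+1)$ determinant on the right-hand side of (\ref{f1}), and to prove by strong induction that $\{D_{n}\}$ satisfies the same third-order recurrence and the same initial conditions as $\{x_{n}\}$; the conclusion $D_{n}=x_{n}$ for all $n\geq 0$ then follows.

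First I would dispatch the base cases by direct computation: $D_{0}=A=x_{0}$, $D_{1}=Ar-(Ar-B)=B=x_{1}$, and expansion of the $3\times 3$ determinant along its first row gives $D_{2}=Ar^{2}-(Br-C)-r(Ar-B)=C=x_{2}$. The case $n=3$ is slightly atypical because the last row of the $4\times 4$ matrix is $(0,A,-s/t,r)$ rather than the generic shape of later rows; expanding along this row and computing the three $3\times 3$ minors produces $D_{3}=tA+sB+rC=rx_{2}+sx_{1}+tx_{0}=x_{3}$.

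For the inductive step with $n\geq 4$, I would expand $D_{n}$ along its last row, whose only nonzero entries are $1/t$, $-s/t$, $r$ at columns $n-1$, $n$, $n+1$. After collecting signs this gives
\[
D_{n}=\frac{1}{t}\,M_{n+1,n-1}+\frac{s}{t}\,M_{n+1,n}+r\,M_{n+1,n+1}.
\]
The minor $M_{n+1,n+1}$ is the principal $n\times n$ minor and equals $D_{n-1}$. To evaluate the other two minors I would expand each one along its \emph{rightmost} column: in $M_{n+1,n}$ this column (inherited from original column $n+1$) has a single nonzero entry $t$ in its last row, so $M_{n+1,n}=t\,D_{n-2}$; in $M_{n+1,n-1}$ the rightmost column likewise has a single nonzero entry $t$, and after this first expansion the resulting $(n-1)\times(n-1)$ matrix again has a rightmost column with a single nonzero entry $t$, so a second expansion yields $M_{n+1,n-1}=t^{2}\,D_{n-3}$. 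Substituting, $D_{n}=t\,D_{n-3}+s\,D_{n-2}+r\,D_{n-1}$, which is precisely the recurrence satisfied by $\{x_{n}\}$.

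The main obstacle I anticipate is careful bookkeeping of the cofactor expansions. Although column $n$ of the original matrix carries three nonzero entries $t$, $r$, and $-s/t$, it is essential to expand in the correct order (rightmost column first) so that at each stage the column being expanded really does contain a unique nonzero entry. One must also handle the transitional case $n=3$ separately, since the first four rows do not follow the uniform pattern of rows $k\geq 5$. Apart from these two verifications the argument reduces to a routine triple Laplace expansion combined with strong induction.
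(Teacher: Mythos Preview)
The paper does not supply its own proof of this statement: Theorem~\ref{t6} is quoted verbatim from \cite[p.~277]{Cer} and used as a black box, so there is no in-paper argument to compare against. Your plan---verify $D_{0},D_{1},D_{2},D_{3}$ directly, then expand $D_{n}$ along the last row and reduce the two off-diagonal minors by successive expansions along their rightmost columns to obtain $D_{n}=rD_{n-1}+sD_{n-2}+tD_{n-3}$---is correct and is exactly the standard way such four-diagonal determinantal identities are established. The sign bookkeeping and the observation that the principal $n\times n$ minor coincides with $D_{n-1}$ are right, and your separate treatment of $n=3$ (where row~4 carries the entry $A$ rather than $1/t$) is indeed necessary.
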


In the case of the bicomplex third-order quaternions sequence and taking into account the recurrence relation (\ref{e8}), we have $r=s=1$, $t=2$, and initial conditions $\B_{J,0}^{(3)}=i+j+2ij$, $\B_{J,1}^{(3)}=1+i+2j+5ij$ and $\B_{J,2}^{(3)}=1+2i+5j+9ij$.Then, by the use of the previous theorem, we have the following result which gives a different way to calculate the $n$-th term of this sequence:
\begin{proposition}
For $n\geq 0$, we have 
\begin{equation} \label{f2}
\emph{\B}_{J,n}= \left| \begin{matrix}
\emph{\B}_{J,0}  & 1     & 0      &   \cdots & \cdots     & \cdots     & 0  \\
\emph{\B}_{J,0}r-\emph{\B}_{J,1}  & 1 & \left(\emph{\B}_{J,0}\right)^{-1} & 0 &\cdots &\cdots        & 0 \\
0 & \emph{\B}_{J,1}r-\emph{\B}_{J,2} & 1 & 2 & \ddots & \cdots & 0    \\
0  &\emph{\B}_{J,0} & -\frac{1}{2}  & 1 & 2 & \ddots & 0  \\
0  &0 & \frac{1}{2}  & -\frac{1}{2} & 1 & &   \\
\vdots &  \vdots &     & \ddots & \ddots & \ddots & 2 \\
0 & \cdots    & &     0   & \frac{1}{2}     & -\frac{1}{2}     & 1\end{matrix} \right|_{(n+1)\times (n+1)}
\end{equation}
\end{proposition}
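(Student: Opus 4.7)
The proof I would give is a direct specialization of Theorem \ref{t6} to the recurrence (\ref{e8}). Reading off (\ref{e8}) as $x_{n+3}=rx_{n+2}+sx_{n+1}+tx_{n}$ gives the parameters $r=1$, $s=1$, $t=2$, and initial data $A=\B_{J,0}^{(3)}=i+j+2ij$, $B=\B_{J,1}^{(3)}=1+i+2j+5ij$, $C=\B_{J,2}^{(3)}=1+2i+5j+9ij$. Substituting these values into the right-hand side of (\ref{f1}), one obtains exactly the matrix displayed in (\ref{f2}): the entry $1/A$ becomes $(\B_{J,0}^{(3)})^{-1}$, the entries $-s/t$ and $1/t$ become $-1/2$ and $1/2$ respectively, the superdiagonal entries $t$ become $2$, and the diagonal entries $r$ become $1$.

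Before invoking Theorem \ref{t6}, I would pause to check that its hypotheses survive transfer to the bicomplex setting. The first point is that $\mathbb{BC}$ is a commutative ring (see (\ref{e10})), so the usual cofactor expansion of a determinant with entries in $\mathbb{BC}$ is well defined and all manipulations in the proof of Theorem \ref{t6} go through unchanged. The second, and genuinely nontrivial, point is the occurrence of $(\B_{J,0}^{(3)})^{-1}$ as a matrix entry; since $\mathbb{BC}$ has zero divisors, invertibility must actually be verified.

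To settle this, I would write $\B_{J,0}^{(3)}=z_{1}+jz_{2}$ with $z_{1}=i$ and $z_{2}=1+2i\in\C$, and recall that such an element is a unit in $\mathbb{BC}$ exactly when $z_{1}^{2}+z_{2}^{2}\neq 0$. Here
\[
z_{1}^{2}+z_{2}^{2}=-1+(1+2i)^{2}=-4+4i\neq 0,
\]
so $\B_{J,0}^{(3)}$ is invertible, and $(\B_{J,0}^{(3)})^{-1}$ is a well-defined bicomplex number. The scalar $t=2$ is trivially a unit. With these two verifications, every symbol appearing in (\ref{f1}) has an unambiguous meaning in $\mathbb{BC}$.

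The expected main obstacle is precisely this invertibility check, since the formal substitution into (\ref{f1}) is mechanical once it is granted. After these preliminaries, applying Theorem \ref{t6} gives $\B_{J,n}^{(3)}$ as the claimed determinant, and the proof terminates with a single sentence noting that the specialization matches (\ref{f2}) entry by entry.
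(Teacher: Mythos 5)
Your proof is correct and follows the same route as the paper, which offers no separate argument at all: the proposition is obtained simply by specializing Theorem \ref{t6} with $r=s=1$, $t=2$ and the initial conditions $\emph{\B}_{J,0}^{(3)}$, $\emph{\B}_{J,1}^{(3)}$, $\emph{\B}_{J,2}^{(3)}$. Your additional checks---that determinants over the commutative ring $\mathbb{BC}$ behave as needed, and that $\emph{\B}_{J,0}^{(3)}=i+j(1+2i)$ is a unit since $z_{1}^{2}+z_{2}^{2}=-4+4i\neq 0$---address a genuine point the paper silently glosses over (in $\mathbb{BC}$, ``$A\neq 0$'' alone does not make $A^{-1}$ meaningful), and they strengthen rather than deviate from the paper's argument.
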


\section{Conclusions}
In this paper, the sequence of bicomplex third-order Jacobsthal quaternions defined by a recurrence relation of third order was introduced. Some properties involving this sequence, including the Binet formula and the generating function, were presented. Considering four diagonal matrices whose entries are bicomplex third-order Jacobsthal quaternions are presented a different way to obtain the $n$-th term of the bicomplex third-order Jacobsthal quaternions sequence.

\medskip
\end{document}